\newcommand{\aspas}[1]{``{#1}''}
\newtheorem{theorem}{Theorem}[section]
\newtheorem{rema}[theorem]{Remark}
\newtheorem{lema}[theorem]{Lemma}
\newtheorem{exam}[theorem]{Example}
\newtheorem{formulation}[theorem]{Formulation}
\newtheorem{defi}[theorem]{Definition}
\newtheorem{prop}[theorem]{Proposition}
\newtheorem{cor}[theorem]{Corollary}
\begin{document}

\title[Topological complexity of Milnor fibration]{Topological complexity of Milnor fibration}

\author{Cesar A. Ipanaque Zapata}
\address{Departamento de Matem\'atica, IME Universidade de S\~ao Paulo\\
Rua do Mat\~ao 1010 CEP: 05508-090 S\~ao Paulo-SP, Brazil}


\author{Taciana O. Souza}
\address{Institute of Mathematics and Statistics, Federal University of Uberl\^andia, Uberl\^andia, Minas Gerais, Brazil.}
\email{tacioli@ufu.br}

\date{}

\subjclass[2010]{Primary 32S55, 55M30, 68T40; Secondary 52C35,70Q05}                                    %

\keywords{Milnor fibration, cross-section, sectional number, Topological complexity, LS category, robotics, algorithms.}
\thanks {The first author would like to thank grant \#2023/16525-7, grant \#2022/16695-7 and grant\#2022/03270-8, S\~{a}o Paulo Research Foundation (FAPESP) for financial support.}

\maketitle

\begin{abstract}
In this paper we discover a connection between the Milnor fibration theory and current research trends in topological robotics. The configuration space and workspace are often described as subspaces of some Euclidean spaces. The work map is a continuous map which assigns to each state of the configuration space the position of the end-effector at that state. The tasking planning problem consists in study the complexity and design of algorithms controlling the task performed by the robot manipulator. We study the tasking planning problem for the Milnor fibration of analytic 
map germs. We see that the tasking planning algorithms strongly depend of the Milnor fibration theorem. We conjecture that the topological complexity of Milnor fibration coincides with the topological complexity of its base. 
\end{abstract}

\section{Introduction}
In this paper \aspas{space} means a topological space, by a ``map'' we will always mean a continuous map, and fibrations are taken in the Hurewicz sense. Given a map $f:X\to Y$, a ``global cross-section'' of $f$ is a map $s:Y\to X$ such that $f\circ s$ is equal to the identity map $1_X:X\to X$.

\medskip Given an autonomous system $\mathcal{S}$, from \cite{bajdRobotics}, recall the following basic notions:
\begin{itemize}
    \item The \textit{configuration space} $C$ is defined as the space of all possible states of $\mathcal{S}$.
    \item The \textit{workspace} $W$ consists of all points that can be reached by the robot end-effector.
    \item The \textit{work map} or \textit{kinematic map} is a map \[f:C\to W\] which assigns to each state of the configuration space the position of the end-effector at that state.
\end{itemize}
The configuration space and workspace are often described as a subspace of some Euclidean space $\mathbb{R}^n$ and $\mathbb{R}^p$, respectively.  For instance, we consider the robot arm as shown in Figure~\ref{fig:RR}. In this case the configuration space is the torus $S^1\times S^1$ and the workspace is the $2$-sphere $S^2$, and the work map is given by $f(\alpha,\beta)=\left(\cos{\alpha}\cos{\beta},\cos{\alpha}\sin{\beta},\sin{\alpha}\right)$, see \cite{pavesic}. 

\begin{figure}[htb] 
 \label{fig:RR}
 \centering
 \includegraphics[scale=0.3]{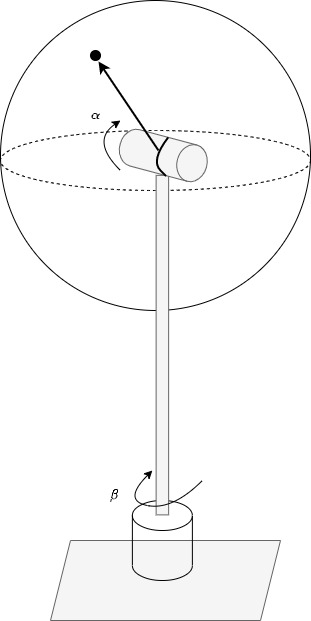}
 \caption{Robot arm (RR).}
\end{figure}

\medskip Let $f:C\to W$ be the kinematic map of a given autonomous system. A \textit{tasking planning algorithm} of $f$, see Pave{\v{s}}i{\'c} \cite{pavesic}, is a function which, to any pair $(c,w)\in C\times W$, assigns a continuous motion $\alpha$ of the system, so that $\alpha$ starts at the given initial configuration $c$ and ends at the desired task $w$ performed by the robot manipulator. The fundamental problem in robotics, \textit{the tasking planning problem}, deals with how to provide, to any kinematic map of a given autonomous system, with a tasking planning algorithm.

\medskip For practical purposes, a tasking planning algorithm should depend continuously on the pair of points $(c,w)$. Unfortunately, if a (global) continuous tasking planning algorithm of $f:C\to W$ exists implies that $f$ admits a global cross-section and $W$ is contractible (see Lemma~\ref{lem:tasking-algorithm-implies-section-contractible}). Yet, if $f$ does not admit a global cross-section or $W$ is not contractible, we could care about finding tasking planning algorithms $s$ defined only on a certain open set of $C\times W$, to which we refer as the domain of definition of $s$. In these terms, a \emph{tasking planner} of $f$ is a set of local continuous tasking planning algorithms whose domains of definition cover $C\times W$. The \emph{topological complexity} of $f$, TC$(f)$, is then the minimal cardinality among tasking planners of $f$ (see Definition~\ref{def:tc}), while a tasking planner of $f$ is said to be \emph{optimal} if its cardinality is TC$(f)$. In the case that $f$ does not admit a tasking planner with a finite number of domains of definition, we write $\text{TC}(f)=\infty$.  The design of explicit tasking planners that are reasonably close to optimal is one of the challenges of modern robotics (see, for example Latombe \cite{latombe2012robot} and LaValle \cite{lavalle2006planning}). 

\medskip In more detail, the components of the tasking planning problem via topological complexity are as follows:
\begin{formulation} Ingredients in the tasking planning problem via topological complexity:
\begin{enumerate}
    \item The kinematic map $f:C\to W$. The topology of this kinematic map is assumed to be fully understood in advance. 
    \item Query pairs $(c,w)\in C\times W$. The point $c\in C$ is designated as the initial configuration of the query. The point $w\in W$ is designated as the goal task.  
\end{enumerate}
In the above setting, the goal is to either describe a tasking planner, i.e., describe
\begin{enumerate}\addtocounter{enumi}{2}
\item An open covering $U=\{U_1,\ldots,U_k\}$ of $C\times W$;
\item For each $i\in\{1,\ldots,k\}$, a local continuous tasking planning algorithm, i.e., a map $s_i\colon U_i\to C^{[0,1]}$ satisfying $$s_i(c,w)\left(0\right)=c,\quad f\left(s_i(c,w)\left(1\right)\right)=w$$ for any $(c,w){}\in U_i$ (here $C^{[0,1]}$ stands for the free-path space on $C$ joint with the compact-open topology),
\end{enumerate}
or, else, report that such an planner does not exist.
\end{formulation}

As a motivation of this work, one has the following observation. 

\begin{rema}\label{rem:sing-higher-tc}
It is known that singularities of the kinematic map implies a reduction in freedom of movement in the working space
that arises in certain joint configurations (see \cite{donelan2010}, \cite{pavesic})  and thus the complexity of tasking planning can be higher. Thus, it is reasonable to ask whether it is possible to lower the topological complexity of work maps by restricting them in a neighbourhood of singularities. 
\end{rema}   

\medskip In the book \cite{Milnor} \emph{Singular Points of Complex Hypersurfaces}, John W. Milnor studied singularities of complex hypersurfaces and showed how to attach a locally trivial smooth fiber bundle to each singular point in order to extract topological data from it. Milnor proved the existence of such fibration structures associated to holomorphic functions and also to real analytic maps, but in the latter case for isolated critical points. Both fibrations are currently known as the Milnor fibrations. The real Milnor fibrations have  been extended in several directions for isolated and not isolated singularities. Some interesting results toward that can be found, for example, in \cite{TCR},  \cite{TR}, \cite{Massey} and \cite{RR}.

\medskip We consider $n> p\geq 2$ and $f:(\mathbb{R}^n,0)\longrightarrow (\mathbb{R}^p,0)$ be an analytic map germ which satisfies the Milnor's conditions $(a)$ and $(b)$ (see \cite[Definition 4.1]{Massey}). Instead to study the topological complexity of $f$ (which can be higher, see Remark~\ref{rem:sing-higher-tc}) we can consider the Milnor fibration as work map: \begin{equation*}
   f_{\mid}:f^{-1}(S^{p-1}_{\eta})\cap B^{n}_{\epsilon}\to S^{p-1}_{\eta},  
\end{equation*}
where $0<\eta\ll\epsilon\leq \epsilon_0,$ and $\epsilon_0$ is a Milnor's radius for $f$ at origin (see Section~\ref{sec:milnor-fibration} or  \cite[Theorem 4.3]{Massey}).  

\medskip We will show that it is possible to lower the topological complexity of work maps by restricting them in a neighbourhood of singularities (see Remark~\ref{rem:lower-tc}(1)), so there is a greater need for work to understand it. The purpose of this work is to study the topological complexity of the Milnor fibration and design their optimal tasking planning algorithms. We see that these tasking planning algorithms strongly depend of the Milnor fibration theorem, see Remark~\ref{rem:depends}. Furthermore, we conjecture that the topological complexity of Milnor fibration coincidences with the topological complexity of its base (which is a sphere).

\medskip The study of topological complexity of the Milnor fibration is quasi non-existent. We note that a previous version of topological complexity of the Milnor fibration appeared in the manuscript \cite{zapata2019preprint}\footnote{The author presents a review of the existence of cross-sections for the Milnor fibration and as a direct consequence he computes the topological complexity and design optimal tasking planning algorithms.}. In this work we present a new theorem of the existence of cross-section for the Milnor fibration. In addition, we note that designing optimal tasking planning algorithms strongly depends of the Milnor fibration theorem. This last is important because it presents a connection between the Milnor fibration theory and current research trends in topological robotics.  

\medskip  The paper is organized as follows: We start with a brief review about the existence of Milnor fibration and its topology under the Milnor's conditions $(a)$ and $(b)$ (Section~\ref{sec:milnor-fibration}). In addition, we present Remark~\ref{explicit-H}. Theorem~\ref{cross-milnor-homotopy-group} presents a characterisation of the existence of cross-sections for Milnor fibrations in terms of homotopy groups of the fiber. In Section~\ref{sec:robotics} we review the notion of topological complexity. The proof of Lemma~\ref{lem:pullback} is very technical and will be use in the last section. Proposition~\ref{sec-milnor-fibration} presents a characterisation of the connectivity of the Milnor fiber in terms of the sectional number. Proposition~\ref{prop:milnor-tube-con-fiber} presents a necessary condition for the connectivity of the Milnor fiber in terms of the singular cohomology of the Milnor tube. Lemma~\ref{lem:tasking-algorithm-implies-section-contractible} presents a motivation for the definition of topological complexity. The algorithms presented in Example~\ref{spheres} are keys to our purpose. Another fundamental result is Proposition~\ref{princ-prop}. In Section~\ref{sec:tc-milnor-fibration} we study the tasking planning problem of Milnor fibrations. For instance, we show that it is possible to lower the topological complexity of work maps by restricting them in a neighbourhood of singularities (Remark~\ref{rem:lower-tc}). Furthermore, we compute the topological complexity of Milnor fibration and we design optimal tasking planning algorithms (Theorem~\ref{thm:principal-theorem}).  

\section{Milnor fibration revisited}\label{sec:milnor-fibration}

Motivated by \cite{Milnor}, Massey establishes Milnor's conditions $(a)$ and $(b)$ as follows (see \cite[Definition 4.1]{Massey}). Let $f = (f_1, \ldots, f_p): (\mathbb{R}^n, 0) \to (\mathbb{R}^p, 0)$ be a non-constant analytic map germ, $2\leq p \leq n-1$, $V:= f^{-1}(0)$ and $\Sigma_{f}$ be the set of critical points of $f.$ Let $r: \mathbb{R}^n \to \mathbb{R}$ be the square of the distance function to the origin, $r(x)=\|x\|^2$, and let $\Sigma_{(f, r)}$ be the set of critical points of the pair $(f,r)$. Note that $\Sigma_{f} \subseteq \Sigma_{(f,r)}.$ Consider that  $B^n_{\epsilon}$ denotes the $n$-dimensional closed ball centered at the origin of radius $\epsilon$ in $\mathbb{R}^n$.

\begin{itemize}
\item[(1)] The map $f$ \textit{satisfies the Milnor condition $(a)$ at origin} if $\Sigma_{f} \subset V$ in a neighborhood of the origin. 

\item[(2)] The map $f$ \emph{satisfies the Milnor condition $(b)$ at origin} if $0$ is isolated in $V\cap \overline{\Sigma_{(f, r)}\setminus V}$ in a neighborhood of the origin, where the \aspas{bar} notation means the topological closure of the space $\Sigma_{(f, r)}\setminus V$ in $\Sigma_{(f,r)}.$

\item[(3)] If $f$ satifies the Milnor's condition $(a)$ \rm(respectively $(b)$\rm), then we say that $\epsilon_{0} >0$ is a \emph{Milnor $(a)$ radius for $f$ at origin} (respectively, Milnor $(b)$ radius for $f$ at origin) provided that for all $0<\epsilon \leq \epsilon_{0}$ one gets
$B_{\epsilon}^n \cap (\overline{\Sigma_{f} \setminus V}) = \emptyset$ \rm(respectively, $B_{\epsilon}^n \cap V \cap  (\overline{\Sigma_{(f, r)} \setminus V}) \subseteq \{0\}$\rm).
\end{itemize}

\medskip Under condition (3) above, we simply say that $\epsilon_{0}$ is a \emph{Milnor radius for $f$ at origin}, if $\epsilon_{0}$ is both a Milnor $(a)$ and Milnor $(b)$ radius for $f$ at origin.

\medskip The Milnor conditions $(a)$ and $(b)$ are enough to ensure the existence of the Milnor fibrations as follows (see \cite[Theorem 4.3]{Massey}). Set $S_{\eta}^{p-1}$ denote the $(p-1)$-dimensional sphere centered at the origin of radius $\eta$ in $\mathbb{R}^p$.

\medskip \textbf{Milnor fibration theorem:} Let $f = (f_1, \ldots, f_p): (\mathbb{R}^n, 0) \to (\mathbb{R}^p, 0)$ be an analytic map germ, and $\epsilon_0$ be a Milnor radius for $f$ at the origin. Then, for each $0< \epsilon \leq \epsilon_0$, there exists $\eta$, $0< \eta \ll \epsilon$, such that

\begin{equation}\label{MasseyII}
f_{|}: B_{\epsilon}^n \cap f^{-1}(S_{\eta}^{p-1}) \to S_{\eta}^{p-1},
\end{equation}
is the projection of a smooth locally trivial fiber bundle. Hence, (\ref{MasseyII}) is a fibration which we will say the real {\it Milnor fibration} in the tube and its fiber $F$ as the real {\it Milnor fiber}. 

\begin{rema}\label{explicit-H}
 Let $f = (f_1, \ldots, f_p): (\mathbb{R}^n, 0) \to (\mathbb{R}^p, 0)$ be an analytic map germ, and $\epsilon_0$ be a Milnor radius for $f$ at the origin. Given any commutative diagram \begin{eqnarray*}
\xymatrix{ &Z \ar[rr]^{\varphi} \ar[d]_{j_0} & & B_{\epsilon}^n \cap f^{-1}(S_{\eta}^{p-1})\ar[d]^{f_{|}} & \\
       &Z\times [0,1]  \ar[rr]_{H} & &  S_{\eta}^{p-1} &  }
\end{eqnarray*}  where $j_0(z)=(z,0)$. By the Milnor fibration theorem, the map $f_{|}: B_{\epsilon}^n \cap f^{-1}(S_{\eta}^{p-1}) \to S_{\eta}^{p-1}$ is a fibration and thus there is a map $\widetilde{H}:Z\times [0,1]\to B_{\epsilon}^n \cap f^{-1}(S_{\eta}^{p-1})$,

\begin{eqnarray*}
\xymatrix{ &Z \ar[rr]^{\varphi} \ar[d]_{j_0} & & B_{\epsilon}^n \cap f^{-1}(S_{\eta}^{p-1})\ar[d]^{f_{|}} & \\
       &Z\times [0,1]\ar@{-->}[urr]^{\widetilde{H}}   \ar[rr]_{H} & &  S_{\eta}^{p-1} &  }
\end{eqnarray*} such that $\widetilde{H}(z,0)=\varphi(z),~\forall z\in Z$ and $f_{|}\circ \widetilde{H}=H$. We do not know an explicit construction for such homotopy $\widetilde{H}$.   
\end{rema}

In \cite[Theorem 8.3]{souza2023}, the authors present new results about the topology of Milnor fibration. For our aim, we recall the following statement. 

\begin{prop}\label{TuboConexo}\cite[Theorem 8.3]{souza2023}
\medskip Let $f = (f_1, \ldots, f_p): (\mathbb{R}^n, 0) \to (\mathbb{R}^p, 0)$, $n > p \geq 2$, be an analytic map germ that satisfies Milnor's conditions $(a)$ and $(b)$ at origin. Assume further that all cells of the link $K= f^{-1} (0) \cap S_{\epsilon}^{n-1}$ have dimensions $\leq n-p-1.$

\begin{itemize}
\item[(1)] If $p\geq 2,$ then the total space of the Milnor fibration (\ref{MasseyII}) is path connected.

\item[(2)] If $p \geq 3$ then the Milnor fiber is path connected.

\item[(3)] If $p=2$ the Milnor fiber is path connected if and only if the Milnor fibration (\ref{MasseyII}) admits a global cross-section.

\end{itemize}
\end{prop}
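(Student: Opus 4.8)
The plan is to analyze each statement via the long exact homotopy sequence of the fibration $f_{|}: B_{\epsilon}^n \cap f^{-1}(S_{\eta}^{p-1}) \to S_{\eta}^{p-1}$ and a connectivity estimate on the Milnor fiber $F$ coming from the cell-dimension hypothesis on the link $K$. First I would establish the connectivity of $F$: the fiber $F$ is known (by Milnor's construction, as reworked in \cite{Massey}, \cite{souza2023}) to be homotopy equivalent to a space built from $B_\epsilon^n \setminus V$ or, more precisely, to have the homotopy type controlled by the link complement $S^{n-1}_\epsilon \setminus K$. The hypothesis that all cells of $K$ have dimension $\le n-p-1$ forces, by Alexander duality in $S^{n-1}_\epsilon$ together with a general-position/CW argument, that $S^{n-1}_\epsilon \setminus K$ — and hence $F$ — is $(p-2)$-connected. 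This is exactly the input I need: $\pi_i(F) = 0$ for $0 \le i \le p-2$.

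For part (1), I would feed this into the homotopy exact sequence
\begin{equation*}
\pi_1(F) \to \pi_1\big(B_{\epsilon}^n \cap f^{-1}(S_{\eta}^{p-1})\big) \to \pi_1(S_{\eta}^{p-1}) \to \pi_0(F) \to \pi_0\big(B_{\epsilon}^n \cap f^{-1}(S_{\eta}^{p-1})\big) \to \pi_0(S_{\eta}^{p-1}).
\end{equation*}
Since $p \ge 2$, the base $S^{p-1}_\eta$ is path connected, so $\pi_0(S_{\eta}^{p-1}) = \ast$; and $F$ being $(p-2)$-connected with $p \ge 2$ gives $\pi_0(F) = \ast$. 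Exactness then yields $\pi_0$ of the total space is trivial, i.e., it is path connected. For part (2), when $p \ge 3$ the connectivity bound gives $\pi_0(F) = \pi_1(F) = \cdots = \pi_{p-2}(F) = 0$, and in particular $\pi_0(F)=\ast$ outright (indeed $F$ is simply connected when $p\ge 3$), which is the claim; here the base sphere $S^{p-1}$ with $p\ge 3$ being simply connected even upgrades this, but path-connectedness is immediate.

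For part (3), the case $p = 2$ is the subtle one and I expect it to be the main obstacle. Here the base is $S^1_\eta$, so $\pi_1(S^1_\eta) = \Z$ and $\pi_0(S^1_\eta) = \ast$, and the relevant fragment is
\begin{equation*}
\pi_1(S^1_\eta) \xrightarrow{\partial} \pi_0(F) \to \pi_0\big(B_{\epsilon}^n \cap f^{-1}(S_{\eta}^{1})\big) \to \ast.
\end{equation*}
The total space is path connected by part (1), so $\partial$ is surjective: the monodromy of the bundle acts transitively on $\pi_0(F)$. Now a global cross-section $s: S^1_\eta \to B_{\epsilon}^n \cap f^{-1}(S_{\eta}^{1})$ splits the map $\pi_1(\text{total}) \to \pi_1(S^1_\eta)$, which forces $\partial$ to be the zero map, hence $\pi_0(F) = \ast$, i.e., $F$ is path connected. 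Conversely, if $F$ is path connected then $\pi_0(F) = \ast$ and the bundle with fiber the connected space $F$ over $S^1$ — I would argue the section exists by an obstruction-theory argument: the only obstruction to a section of a fibration over $S^1$ lies in $H^1(S^1; \pi_0(F))$ which vanishes when $\pi_0(F)$ is a single point, equivalently one directly constructs the section by lifting a path that traverses $S^1$ and closing it up, the closing-up being possible precisely because the monodromy self-map of the connected fiber $F$ is homotopic to the identity through the bundle triviality over contractible pieces — this last gluing step is where care is needed and is the crux of the argument. I would make this precise using the clutching description of bundles over $S^1$ and the fact that a section corresponds to a fixed point (up to homotopy) of the monodromy, which exists when $F$ is connected by a standard argument, or alternatively cite the corresponding statement already implicit in \cite{souza2023}.
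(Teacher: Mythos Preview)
Your argument rests on the claim that the Milnor \emph{fiber} $F$ is $(p-2)$-connected because it ``has the homotopy type controlled by the link complement $S^{n-1}_\epsilon\setminus K$.'' This is the wrong space: it is the \emph{total space} $E=B^n_\epsilon\cap f^{-1}(S^{p-1}_\eta)$ of the tube fibration that is homotopy equivalent to the link complement (equivalently to $B^n_\epsilon\setminus V$), not the fiber. The general-position computation you sketch is fine and does give that $S^{n-1}_\epsilon\setminus K$ is $(p-2)$-connected when $\dim K\le n-p-1$, but that yields $\pi_i(E)=0$ for $i\le p-2$, not $\pi_i(F)=0$. That these two conclusions are genuinely different is visible already in this paper: Proposition~\ref{prop:milnor-tube-con-fiber} exhibits situations (with $p=2$ and the cell hypothesis satisfied) in which the Milnor fiber is \emph{not} path connected, which would be impossible if your connectivity claim for $F$ held.

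With the roles corrected, the proof reorganises as follows. Part~(1) is immediate from $\pi_0(E)=0$. Part~(2) then follows from the tail of the long exact sequence $\pi_1(S^{p-1})\to\pi_0(F)\to\pi_0(E)$: for $p\ge 3$ both outer terms vanish. Your proof of~(1), by contrast, deduced $\pi_0(E)=0$ from $\pi_0(F)=0$, which is circular in view of~(2) and~(3) and in any case false when the fiber is disconnected. Your argument for part~(3) is essentially correct and survives: once~(1) is established, the exact sequence $\pi_1(E)\to\pi_1(S^1)\stackrel{\partial}{\to}\pi_0(F)\to 0$ shows $F$ is path connected iff $(f_{|})_\#:\pi_1(E)\to\pi_1(S^1)$ is surjective, and since $f_{|}$ is a fibration this is equivalent to the existence of a global section (your obstruction-theory reasoning, or equivalently lifting a generator of $\pi_1(S^1)$ and straightening the homotopy via the HLP). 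Note finally that the paper does not give its own proof of this proposition --- it is quoted from \cite{souza2023} --- so there is no in-paper argument to compare against, but the approach there follows the pattern just described.
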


\medskip Furthermore, one has the following statement.

\begin{prop}\label{group-fiber-cross-section}
    Let $F\hookrightarrow E\stackrel{p}{\to} S^d$ be a fibration ($d\geq 1$) with $F$ and $E$ are path connected. We have that $\pi_{d-1}(F)=0$ if and only if $\pi_{d-1}(E)=0$ and $p$ admits a global cross-section. 
\end{prop}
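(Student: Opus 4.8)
The plan is to derive everything from the long exact sequence of homotopy groups of the fibration $F\hookrightarrow E\xrightarrow{p}S^d$,
\begin{equation*}
\cdots\longrightarrow\pi_d(E)\xrightarrow{\,p_*\,}\pi_d(S^d)\xrightarrow{\,\partial\,}\pi_{d-1}(F)\xrightarrow{\,i_*\,}\pi_{d-1}(E)\longrightarrow\pi_{d-1}(S^d)\longrightarrow\cdots,
\end{equation*}
together with the elementary facts that $\pi_{d-1}(S^d)=0$ and $\pi_d(S^d)\cong\mathbb{Z}$ for all $d\geq 1$. For $d\geq 2$ this is a sequence of groups and homomorphisms; for $d=1$ its tail must be read as an exact sequence of pointed sets, and the hypotheses that $E$ and $F$ are path connected give $\pi_0(E)=\pi_0(F)=0$.

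The first step is an auxiliary equivalence: \emph{$p$ admits a global cross-section if and only if $p_*\colon\pi_d(E)\to\pi_d(S^d)$ is surjective.} One direction is immediate, since a cross-section $s$ satisfies $p_*\circ s_*=\mathrm{id}$ on $\pi_d(S^d)$. For the converse, surjectivity of $p_*$ gives a based map $\alpha\colon S^d\to E$ with $p_*[\alpha]=[\mathrm{id}_{S^d}]$ in $\pi_d(S^d)$, i.e.\ a homotopy $H\colon S^d\times[0,1]\to S^d$ from $p\circ\alpha$ to $\mathrm{id}_{S^d}$; lifting $H$ through the fibration $p$ with initial value $\alpha$ produces $\widetilde H$ with $p\circ\widetilde H=H$ and $\widetilde H_0=\alpha$, and then $s:=\widetilde H_1$ is a global cross-section.

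With this at hand, the statement becomes a short exactness chase. Exactness at $\pi_d(S^d)$ shows $p_*$ is surjective if and only if $\partial=0$, and exactness at $\pi_{d-1}(F)$ shows $\partial=0$ if and only if $i_*$ is injective; exactness at $\pi_{d-1}(E)$ combined with $\pi_{d-1}(S^d)=0$ shows $i_*$ is always surjective. Hence, if $\pi_{d-1}(F)=0$, then $i_*$ is trivially injective, so $p$ has a cross-section by the auxiliary equivalence, and $\pi_{d-1}(E)=\operatorname{im}i_*=0$; conversely, if $\pi_{d-1}(E)=0$ and $p$ has a cross-section, then $p_*$ is surjective, so $i_*$ is injective, and therefore $\pi_{d-1}(F)$ embeds in $\pi_{d-1}(E)=0$ and vanishes. (When $d=1$ both sides of the equivalence hold automatically: $\pi_0(F)=\pi_0(E)=0$ by hypothesis, and $p_*\colon\pi_1(E)\to\pi_1(S^1)$ is surjective because $\pi_0(F)=0$, so $p$ has a cross-section.)

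The only step that requires genuine care is the auxiliary equivalence --- turning the homotopy-group condition \aspas{$p_*$ surjective on $\pi_d$} into an actual global cross-section --- where one invokes the homotopy lifting property of $p$ and the identification of $[\mathrm{id}_{S^d}]$ with a generator of $\pi_d(S^d)$, with a little basepoint bookkeeping. After that, the result is purely formal manipulation of the long exact sequence.
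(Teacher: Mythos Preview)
Your proof is correct and follows essentially the same route as the paper: both arguments read off everything from the long exact sequence of the fibration, using $\pi_{d-1}(S^d)=0$ and the equivalence between surjectivity of $p_*\colon\pi_d(E)\to\pi_d(S^d)$ and the existence of a global cross-section. You are in fact slightly more careful than the paper in two places --- you explicitly invoke the homotopy lifting property to upgrade the homotopy section $\alpha$ (with $p\circ\alpha\simeq 1_{S^d}$) to a strict section, and you separately address the $d=1$ case where the sequence is only of pointed sets --- whereas the paper's write-up tacitly identifies a homotopy section with a cross-section.
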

\begin{proof}
Consider the following piece of the long exact sequence in homotopy of the fibration $p$:
$$\xymatrix{
\pi_d(E) \ar[r]^{p_\#} & \pi_d(S^d) \ar[r] & \pi_{d-1}(F) \ar[r] & \pi_{d-1} (E)\ar[r]^{p_\#} &  \pi_{d-1}(S^d)= 0.
}$$

Suppose $\pi_{d-1}(F)=0$ then $\pi_{d-1} (E)=0$ and $\pi_d(E)\stackrel{p_\#}{\to}\pi_d(S^d)$ is surjective and, therefore, there is a map $\lambda: S^d \to E$ such that $[1_{S^d}] = p_\# ([\lambda]) = [p\circ \lambda]$, where $[1_{S^d}]$ is the homotopy class of the identity map $1_{S^d}$. Thus $\lambda: S^d \to E$ is a global cross-section of $p$.

Conversely, suppose that $\pi_{d-1}(E)=0$ and the Milnor fibration admits a global cross-section $\lambda: S^d \to E$. By the definition $[1_{S^d}] = [p\circ \lambda] = p_\# ([\lambda])$ and, therefore, $p_\#:\pi_d(E)\to \pi_d(S^d)$ is surjective, since $\pi_d(S^d)$ is the infinity cyclic group generated by $[1_{S^d}]$. Thus, since
$$\xymatrix{
\pi_d(E) \ar[r]^{p_\#} & \pi_d(S^d) \ar[r] & \pi_{d-1}(F) \ar[r] & \pi_{d-1}(E)=0.}
$$
is an exact sequence, we conclude that $\pi_{d-1}(F) = 0$.    
\end{proof}

Proposition~\ref{group-fiber-cross-section} together with Proposition~\ref{TuboConexo} imply the following key result. It will be use in our principal Theorem~\ref{thm:principal-theorem}. 

\begin{theorem}\label{cross-milnor-homotopy-group}
Let $f = (f_1, \ldots, f_p): (\mathbb{R}^n, 0) \to (\mathbb{R}^p, 0)$, $n > p \geq 2$, be an analytic map germ that satisfies Milnor's conditions $(a)$ and $(b)$ at origin. Assume further that all cells of $K$ have dimensions $\leq n-p-1.$ Recall that $F\hookrightarrow B_{\epsilon}^n \cap f^{-1}(S_{\eta}^{p-1})\stackrel{f_{|}}{\to} S_{\eta}^{p-1}$ is the Milnor fibration (\ref{MasseyII}). We have that $\pi_{p-2}(F)=0$ if and only if $\pi_{p-2}\left(B_{\epsilon}^n \cap f^{-1}(S_{\eta}^{p-1})\right)=0$ and $f_{|}$ admits a global cross-section.    
\end{theorem}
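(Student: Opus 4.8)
The plan is to deduce the equivalence from Proposition~\ref{group-fiber-cross-section}, applied to the Milnor fibration $F\hookrightarrow B_{\epsilon}^n \cap f^{-1}(S_{\eta}^{p-1})\stackrel{f_{|}}{\to} S_{\eta}^{p-1}$, whose base $S_{\eta}^{p-1}$ is a sphere of dimension $d=p-1\geq 1$. The hypotheses of that proposition demand that the total space $E:=B_{\epsilon}^n \cap f^{-1}(S_{\eta}^{p-1})$ and the fiber $F$ be path connected, and this is exactly what Proposition~\ref{TuboConexo} supplies under the standing assumption that every cell of $K$ has dimension $\leq n-p-1$. I would split the argument according to whether $p\geq 3$ or $p=2$.

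For $p\geq 3$, set $d=p-1\geq 2$, so that the relevant degree is $d-1=p-2\geq 1$. By Proposition~\ref{TuboConexo}(1) the total space $E$ is path connected, and by Proposition~\ref{TuboConexo}(2) the fiber $F$ is path connected as well. Hence all hypotheses of Proposition~\ref{group-fiber-cross-section} hold, and it gives immediately that $\pi_{p-2}(F)=0$ if and only if $\pi_{p-2}(E)=0$ and $f_{|}$ admits a global cross-section, which is the assertion of the theorem in this range.

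The case $p=2$ needs separate handling, because then $\pi_{p-2}(F)=\pi_0(F)$, so the hypothesis \aspas{$F$ is path connected} of Proposition~\ref{group-fiber-cross-section} is precisely the condition \aspas{$\pi_{p-2}(F)=0$} that we wish to characterize, and the proposition cannot be invoked as a black box. Instead I would argue directly: Proposition~\ref{TuboConexo}(1) shows that $E$ is path connected unconditionally, so $\pi_{0}(E)=0$ always holds and the right-hand side \aspas{$\pi_{0}(E)=0$ and $f_{|}$ admits a global cross-section} collapses to \aspas{$f_{|}$ admits a global cross-section}. Now Proposition~\ref{TuboConexo}(3) asserts exactly that, for $p=2$, the Milnor fiber is path connected (that is, $\pi_{0}(F)=0$) if and only if the Milnor fibration (\ref{MasseyII}) admits a global cross-section, which completes the equivalence.

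Thus no new machinery beyond Propositions~\ref{group-fiber-cross-section} and~\ref{TuboConexo} is needed; the only point requiring care is the degenerate degree $d-1=0$ occurring when $p=2$, where the connectivity of the fiber cannot be assumed in advance and must instead be fed in through part (3) of Proposition~\ref{TuboConexo}. I expect this bookkeeping of the $p=2$ case to be the main (and essentially the only) obstacle.
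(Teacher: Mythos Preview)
Your proposal is correct and follows exactly the route the paper indicates: the paper simply asserts that the theorem is obtained by combining Proposition~\ref{group-fiber-cross-section} with Proposition~\ref{TuboConexo}, without writing out any further details. Your write-up in fact improves on the paper's presentation by isolating the degenerate case $p=2$, where the path-connectedness of $F$ required in Proposition~\ref{group-fiber-cross-section} is not available a priori and must instead be supplied through Proposition~\ref{TuboConexo}(3); the paper does not make this bookkeeping explicit.
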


\section{Topological complexity of a map}\label{sec:robotics}
The concept of topological complexity of a map was introduced by Pave{\v{s}}i{\'c} in \cite{pavesic}, see also \cite{zapatasectional}. Here we recall the basic definitions and properties. 

\subsection{Sectional number} Given a map $f:X \to Y$, the \textit{sectional number} of $f$, denoted by $\text{sec}(f)$, is the smallest number $n$ such that $Y$ can be covered by $n$ open subsets $U_1,\ldots,U_n$ with the property that for every $1\leq i\leq n$ there exists a local cross-section $s_i:U_i\to Y$, i.e., $f\circ s_i$ is equal to the inclusion map $incl_{U_i}:U_i\to Y$. 

\begin{rema}
In the case that $f$ is a fibration, we can relax the equality $f\circ s_i=incl_{U_i}$ by requiring that each of the maps $s_i:U_i\to Y$ satisfies the condition $f\circ s_i\simeq incl_{U_i}$. 
\end{rema}

From \cite[p. 1619]{zapata2022higher}, a \textit{quasi pullback} means a strictly commutative diagram
\begin{eqnarray*}
\xymatrix{ \rule{3mm}{0mm}& X^\prime \ar[r]^{\varphi'} \ar[d]_{f^\prime} & X \ar[d]^{f} & \\ &
       Y^\prime  \ar[r]_{\,\,\varphi} &  Y &}
\end{eqnarray*} 
such that, for any strictly commutative diagram as the one on the left hand-side of~(\ref{diagramadoble}), there exists a (not necessarily unique) map $h:Z\to X^\prime$ that renders a strictly commutative diagram as the one on the right hand-side of~(\ref{diagramadoble}). 
\begin{eqnarray}\label{diagramadoble}
\xymatrix{
Z \ar@/_10pt/[dr]_{\alpha} \ar@/^30pt/[rr]^{\beta} & & X \ar[d]^{f}  & & &
Z\rule{-1mm}{0mm} \ar@/_10pt/[dr]_{\alpha} \ar@/^30pt/[rr]^{\beta}\ar[r]^{h} & 
X^\prime \ar[r]^{\varphi'} \ar[d]_{f^\prime} & X \\
& Y^\prime  \ar[r]_{\,\,\varphi} &  Y & & & & Y^\prime &  \rule{3mm}{0mm}}
\end{eqnarray}   

\medskip Furthermore, if the following diagram

\begin{eqnarray*}
\xymatrix{ E^\prime \ar[rr]^{\,\,} \ar[dr]_{p^\prime} & & E \ar[dl]^{p}  \\
        &  B & }
\end{eqnarray*}
commutes, then $\mathrm{sec}\hspace{.1mm}(p^\prime)\geq \mathrm{sec}\hspace{.1mm}(p)$. Also, for any map $p:E\to B$ and any map $f:B^\prime\to B$, note that any local cross-section $s:U\to E$ of $p:E\to B$ induces a local cross-section of the canonical pullback $f^\ast p:B^\prime\times_B E\to B^\prime$, called \textit{the local pullback section} $f^\ast(s):f^{-1}(U)\to B^\prime\times_B E$, simply by defining \[f^\ast(s)(b^\prime)=\left(b^\prime,\left(s\circ f\right)(b^\prime)\right).\]

\begin{eqnarray*}
\xymatrix{ &B^\prime\times_B E \ar[rr]^{ } \ar[d]_{f^\ast p} & & E \ar[d]^{p} & \\
       &B^\prime  \ar[rr]_{f} & &  B & \\
       f^{-1}(U)\ar@{^{(}->}[ru]_{}\ar[rr]_{f}\ar@{-->}@/^10pt/[ruu]^{f^\ast(s)} & &U\ar@{^{(}->}[ru]_{}\ar@{-->}@/^10pt/[ruu]^{s} & & }
\end{eqnarray*} Thus, \begin{align}\label{ineq-canonical}
    \text{sec}(f^\ast p)&\leq \text{sec}(p).
\end{align} 

\medskip The proof of the following statement is very technical and will be use in Proposition~\ref{princ-prop} and Theorem 4.2. 

\begin{lema}\label{lem:pullback}
Let $p:E\to B$ be a map. If the following square
\begin{eqnarray*}
\xymatrix{ E^\prime \ar[r]^{\,\,} \ar[d]_{p^\prime} & E \ar[d]^{p} & \\
       B^\prime  \ar[r]_{\,\, f} &  B &}
\end{eqnarray*}
is a quasi pullback. Then $\text{sec}(p^\prime)\leq \text{sec}(p)$.
\end{lema}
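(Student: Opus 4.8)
The plan is to show that any open cover of $B$ witnessing $\mathrm{sec}(p)$ pulls back, via the map $f\colon B'\to B$, to an open cover of $B'$ of the same cardinality that witnesses $\mathrm{sec}(p')\le\mathrm{sec}(p)$. Concretely, suppose $\mathrm{sec}(p)=k$ and fix an open cover $U_1,\dots,U_k$ of $B$ together with local sections $s_i\colon U_i\to E$ of $p$. Set $V_i:=f^{-1}(U_i)$; these are open and cover $B'$. I would then construct a local section $\sigma_i\colon V_i\to E'$ of $p'$ over each $V_i$ by invoking the quasi pullback property of the given square.

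To set up the quasi pullback diagram, I take $Z=V_i$, the map $\beta\colon V_i\to E$ to be the composite $s_i\circ f|_{V_i}$ (note $f(V_i)\subseteq U_i$ so this makes sense), and the map $\alpha\colon V_i\to B'$ to be the inclusion $V_i\hookrightarrow B'$. The required commutativity $p\circ\beta=f\circ\alpha$ holds because $p\circ s_i=incl_{U_i}$, so $p\circ s_i\circ f|_{V_i}=f|_{V_i}=f\circ(V_i\hookrightarrow B')$. The quasi pullback property then yields a map $h=\sigma_i\colon V_i\to E'$ with $p'\circ\sigma_i=\alpha=V_i\hookrightarrow B'$ — that is, $\sigma_i$ is a local cross-section of $p'$ over $V_i$. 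Since the $V_i$ cover $B'$ and each carries such a section, we get $\mathrm{sec}(p')\le k=\mathrm{sec}(p)$. If $\mathrm{sec}(p)=\infty$ there is nothing to prove.

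The main subtlety to be careful about is the bookkeeping in the universal property: the quasi pullback condition as stated in the excerpt only guarantees a map $h$ making the diagram on the right of~(\ref{diagramadoble}) commute, and I must verify that the leg I actually need, namely $p'\circ h=\alpha$, is among the identities asserted there (it is — that triangle is exactly $f'\circ h=\alpha$ with $f'=p'$ in our notation). A second point worth a sentence is that $h$ need not be unique, but uniqueness is irrelevant: any one choice of $\sigma_i$ on each $V_i$ suffices, and there is no compatibility required between the pieces. I do not expect a genuine obstacle here; the content is entirely in correctly instantiating the quasi pullback diagram with $Z=f^{-1}(U_i)$ and reading off the resulting section.
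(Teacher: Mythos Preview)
Your argument is correct. You instantiate the quasi pullback property directly with $Z=V_i=f^{-1}(U_i)$, $\alpha$ the inclusion, and $\beta=s_i\circ f|_{V_i}$, and read off the local section $\sigma_i$ of $p'$; the verification of the commutativity $p\circ\beta=f\circ\alpha$ and of the conclusion $p'\circ\sigma_i=\alpha$ is exactly right.

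The paper's proof takes a slightly different, more categorical route. Rather than producing each $\sigma_i$ directly from the quasi pullback property, it interposes the \emph{canonical} pullback $f^\ast p\colon B'\times_B E\to B'$. Using the universal property in both directions one gets commutative triangles $B'\times_B E\to E'$ and $E'\to B'\times_B E$ over $B'$, whence $\mathrm{sec}(p')=\mathrm{sec}(f^\ast p)$; the conclusion then follows from the already-established inequality $\mathrm{sec}(f^\ast p)\le\mathrm{sec}(p)$ (obtained via the explicit local pullback section $f^\ast(s_i)(b')=(b',s_i(f(b')))$). Your approach collapses these two steps into one: in effect you are building $f^\ast(s_i)$ and then pushing it through the comparison map $B'\times_B E\to E'$ in a single invocation of the quasi pullback property. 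The paper's detour buys the sharper intermediate statement $\mathrm{sec}(p')=\mathrm{sec}(f^\ast p)$, which is not needed for the lemma as stated; your direct argument is shorter and entirely adequate for the purpose.
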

\begin{proof}
Since $p'$ is a quasi pullback, we have the following commutative triangle
\begin{eqnarray*}
\xymatrix{ B^\prime\times_B E \ar[rr]^{\,\,} \ar[dr]_{f^\ast p} & & E^\prime \ar[dl]^{p^\prime}  \\
        &  B^\prime & }
\end{eqnarray*} and thus $\text{sec}(f^\ast p)\geq\text{sec}(p')$. Similarly, since $f^\ast p$ is the canonical pullback, we have the following commutative triangle
\begin{eqnarray*}
\xymatrix{ E^\prime \ar[rr]^{\,\,} \ar[dr]_{p^\prime} & &  B^\prime\times_B E\ar[dl]^{f^\ast p}  \\
        &  B^\prime & }
\end{eqnarray*} and thus $\text{sec}(p')\geq\text{sec}(f^\ast p)$. Hence, the equality $\text{sec}(p')=\text{sec}(f^\ast p)$ holds. By the inequality~(\ref{ineq-canonical}), we obtain $\mathrm{sec}\hspace{.1mm}(p^\prime)\leq \mathrm{sec}\hspace{.1mm}(p)$.  
\end{proof}

The \textit{Lusternik-Schnirelmann category} (LS category) or category of a space $X$ (see \cite{cornea2003lusternik}), denoted by cat$(X)$, is the least integer $n$ such that $X$ can be covered with $n$ open sets which are all contractible within $X$. We have $\text{cat}(X)=1$ iff $X$ is contractible. The LS category is a homotopy invariant, i.e., if $X$ is homotopy equivalent to $Y$ then $\text{cat}(X)=\text{cat}(Y)$. Note that $\text{cat}(S^n)=2$ for any $n\geq 0$.

\medskip For convenience, we record the following standard properties:
\begin{lema}\label{lem:sec-cat}
Let $p:E\to B$ be a map and $h^\ast$ be any multiplicative cohomology theory. Then
\begin{enumerate}
    \item \cite[Theorem 18, p. 108]{schwarz1966} If $p$ is a fibration, then $\text{sec}(p)\leq \text{cat}(B)$.
    \item \cite[Proposi\c{c}\~{a}o 4.3.17-(3), p. 138]{zapata2022} If there are $x_1,\ldots,x_k\in h^\ast(B)$ with \[p^\ast(x_1)=\cdots=p^\ast(x_k)=0 \text{ and } x_1\cup\cdots\cup x_k\neq 0,\] \noindent then \[\text{sec}(p)\geq k+1.\] Where $p^\ast:h^\ast(B)\to h^\ast(E)$ is the induced homomorphism in cohomology.
\end{enumerate}
\end{lema}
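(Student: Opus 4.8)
The plan is to prove the two items separately by the classical Schwarz--genus technique; since both statements are quoted from \cite{schwarz1966} and \cite{zapata2022}, the aim here is to record the arguments rather than to reproduce every detail. Throughout I assume $p$ is onto, so that all fibres are nonempty (otherwise $\mathrm{sec}(p)=\infty$ and item (1) is vacuous).

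For item (1), I would begin with an open cover $\{U_1,\dots,U_k\}$ of $B$ with $k=\mathrm{cat}(B)$, where each inclusion $incl_{U_i}\colon U_i\hookrightarrow B$ is null-homotopic, via a homotopy $G_i\colon U_i\times[0,1]\to B$ with $G_i(\cdot,0)=incl_{U_i}$ and $G_i(\cdot,1)\equiv b_i$ for some $b_i\in B$. Choosing a point $e_i\in p^{-1}(b_i)$ and letting $c_i\colon U_i\to E$ be the constant map at $e_i$, the reversed homotopy $(x,t)\mapsto G_i(x,1-t)$ starts at $p\circ c_i$, so the homotopy lifting property of the fibration $p$ yields a lift whose terminal map $s_i\colon U_i\to E$ satisfies $p\circ s_i=G_i(\cdot,0)=incl_{U_i}$. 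Hence the $U_i$ together with the $s_i$ witness $\mathrm{sec}(p)\le k=\mathrm{cat}(B)$; if one only insists on the homotopy version of local sections (as in the Remark following the definition of $\mathrm{sec}$), any lift of $G_i$ itself already does the job.

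For item (2), suppose toward a contradiction that $\mathrm{sec}(p)\le k$ and fix an open cover $U_1,\dots,U_k$ of $B$ (repeating members if $\mathrm{sec}(p)<k$) each admitting a local section $s_i\colon U_i\to E$ with $p\circ s_i=incl_{U_i}$. For each index $j$ and each $i$ we then have $incl_{U_i}^\ast(x_j)=s_i^\ast(p^\ast(x_j))=s_i^\ast(0)=0$ in $h^\ast(U_i)$, so the long exact sequence of the pair $(B,U_i)$ produces a class $\widetilde{x}_j^{(i)}\in h^\ast(B,U_i)$ mapping to $x_j$ under restriction. Multiplying these via the relative cup product $h^\ast(B,U_1)\otimes\cdots\otimes h^\ast(B,U_k)\to h^\ast(B,\,U_1\cup\cdots\cup U_k)$ for the (excisive, since open) family $\{U_i\}$, the element $\widetilde{x}_1^{(1)}\cup\cdots\cup\widetilde{x}_k^{(k)}$ lies in $h^\ast(B,\,U_1\cup\cdots\cup U_k)=h^\ast(B,B)=0$ and restricts to $x_1\cup\cdots\cup x_k$ in $h^\ast(B)$; therefore $x_1\cup\cdots\cup x_k=0$, contradicting the hypothesis. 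Thus no such cover of size $k$ exists and $\mathrm{sec}(p)\ge k+1$.

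The one point that needs care is the bookkeeping with relative cup products in a general multiplicative cohomology theory $h^\ast$: one must know that the family $\{U_i\}$ is excisive (true for open sets), that the relative products exist and are associative, and that they are compatible with restriction to absolute cohomology. This is exactly the multiplicative structure underlying Schwarz's lower bound for the genus, and it is where I would be most careful, although it is by now entirely standard.
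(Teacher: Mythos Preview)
Your proof is correct; the arguments you give for both items are the standard Schwarz--genus ones and there are no gaps. Note, however, that the paper itself does not prove this lemma: it is stated as a recorded fact with references to \cite{schwarz1966} and \cite{zapata2022}, so there is no ``paper's own proof'' to compare against---your write-up simply supplies what the paper leaves to the literature.
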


\begin{rema}
 In this paper we will use Lemma~\ref{lem:sec-cat}(2) for $h^\ast$ as being the singular cohomology $H^\ast(-;R)$ with any coefficient ring $R$ (as was presented by James in \cite[p. 342]{james1978}).   
\end{rema} 

Proposition~\ref{TuboConexo}(3) implies a characterisation of the connectivity of the Milnor fiber in terms of the sectional number. 

\begin{prop}\label{sec-milnor-fibration}
    Let $f = (f_1, f_2): (\mathbb{R}^n, 0) \to (\mathbb{R}^2, 0)$, $n >2$, be an analytic map germ that satisfies Milnor's conditions $(a)$ and $(b)$ at origin. Assume further that all cells of $K$ have dimensions $\leq n-3.$ Then $\text{sec}(f_{|})=2$ if and only if the Milnor fiber is not path connected. Where $f_{|}$ is the Milnor fibration (\ref{MasseyII}).
\end{prop}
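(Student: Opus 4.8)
The key point is that the base of the Milnor fibration~(\ref{MasseyII}) is the circle $S_{\eta}^{1}$, whose LS category equals $2$. Since $f_{|}$ is a fibration by the Milnor fibration theorem, Lemma~\ref{lem:sec-cat}(1) gives the upper bound $\mathrm{sec}(f_{|})\leq \mathrm{cat}(S_{\eta}^{1})=2$. On the other hand the total space is non-empty (indeed path connected by Proposition~\ref{TuboConexo}(1)), so $\mathrm{sec}(f_{|})\geq 1$. Hence $\mathrm{sec}(f_{|})\in\{1,2\}$, and the only thing to decide is whether the value is $1$ or $2$.

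Next I would record the tautology that $\mathrm{sec}(f_{|})=1$ if and only if $S_{\eta}^{1}$ can be covered by a single open set (namely $S_{\eta}^{1}$ itself) admitting a local cross-section, i.e. if and only if $f_{|}$ admits a global cross-section. Equivalently, $\mathrm{sec}(f_{|})=2$ if and only if $f_{|}$ does \emph{not} admit a global cross-section.

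It then remains to invoke the hypotheses: $f=(f_{1},f_{2})$ satisfies Milnor's conditions $(a)$ and $(b)$ and all cells of the link $K$ have dimension $\leq n-3=n-p-1$ with $p=2$. These are exactly the hypotheses of Proposition~\ref{TuboConexo}, whose item $(3)$ (the case $p=2$) states that the Milnor fiber is path connected if and only if $f_{|}$ admits a global cross-section. Combining this equivalence with the previous paragraph yields: $\mathrm{sec}(f_{|})=2$ $\iff$ $f_{|}$ has no global cross-section $\iff$ the Milnor fiber is not path connected, which is the claim.

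\textbf{Main obstacle.} There is essentially no hard analytic content here; all the substantive work is already packaged into Proposition~\ref{TuboConexo}(3). The only points requiring a little care are bookkeeping ones: checking that $\mathrm{cat}(S^{1})=2$ so the upper bound pins $\mathrm{sec}(f_{|})$ down to the two-element set $\{1,2\}$, and checking that ``$\mathrm{sec}=1$'' is literally the existence of a global cross-section (so that the dichotomy $\mathrm{sec}\in\{1,2\}$ translates cleanly into the cross-section dichotomy supplied by Proposition~\ref{TuboConexo}(3)).
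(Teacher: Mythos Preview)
Your proposal is correct and follows essentially the same approach as the paper: both use Lemma~\ref{lem:sec-cat}(1) together with $\mathrm{cat}(S^{1})=2$ to bound $\mathrm{sec}(f_{|})\leq 2$, and then invoke Proposition~\ref{TuboConexo}(3) to identify the dichotomy $\mathrm{sec}(f_{|})\in\{1,2\}$ with whether or not the Milnor fiber is path connected. The only cosmetic difference is that you first pin down $\mathrm{sec}(f_{|})\in\{1,2\}$ and then translate, whereas the paper handles the two implications separately.
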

\begin{proof}
  Suppose that $\text{sec}(f_{|})=2$. Then $f_{|}$ does not admit a global cross-section, and by Proposition~\ref{TuboConexo}(3), we have that the Milnor fiber is not path connected. 

  Now, suppose that the Milnor fiber is not path connected. Then, by Proposition~\ref{TuboConexo}(3), we obtain that $f_{|}$ does not admit a global cross-section and thus $\text{sec}(f_{|})\geq 2$. Furthermore, by Lemma~\ref{lem:sec-cat}(1), we have that $\text{sec}(f_{|})\leq \text{cat}(S^1_\eta)$. Recall that $\text{cat}(S^1_\eta)=2$ and hence $\text{sec}(f_{|})=2$.  
\end{proof}

 Lemma~\ref{lem:sec-cat} implies the following example.

\begin{exam}\label{example:cohomology-trivial}
 Let $f = (f_1, \ldots, f_p): (\mathbb{R}^n, 0) \to (\mathbb{R}^p, 0)$, $n > p \geq 2$, be an analytic map germ that satisfies Milnor's conditions $(a)$ and $(b)$ at origin. Recall that $f_{|}: B_{\epsilon}^n \cap f^{-1}(S_{\eta}^{p-1}) \to S_{\eta}^{p-1}$ denotes the Milnor fibration (\ref{MasseyII}). For any coefficient ring $R$, $H^\ast(S^{p-1};R)\cong\dfrac{R[\alpha]}{\langle \alpha^2\rangle}$, where $\alpha\in H^{p-1}(S^{p-1};R)$ is the fundamental class of the sphere. Suppose that $H^{p-1}\left(B_{\epsilon}^n \cap f^{-1}(S_{\eta}^{p-1});R\right)=0$ and thus $f_{|}^\ast(\alpha)=0$ (i.e., $\alpha\in \text{Ker}(f_{|}^\ast)$). Then, by Lemma~\ref{lem:sec-cat}(2), we obtain that $\text{sec}(f_{|})\geq 2$ and hence $\text{sec}(f_{|})=2$ (here we use Lemma~\ref{lem:sec-cat}(1) together with the fact that $\text{cat}(S^{p-1}_\eta)=2$).    
\end{exam}

A direct application of Proposition~\ref{sec-milnor-fibration} together with Example~\ref{example:cohomology-trivial} is the following statement.

\begin{prop}\label{prop:milnor-tube-con-fiber}
   Let $f = (f_1, f_2): (\mathbb{R}^n, 0) \to (\mathbb{R}^2, 0)$, $n >2$, be an analytic map germ that satisfies Milnor's conditions $(a)$ and $(b)$ at origin. Assume further that all cells of $K$ have dimensions $\leq n-3.$ If $H^1\left(B_{\epsilon}^n \cap f^{-1}(S_{\eta}^{1});R\right)=0$ then $\text{sec}(f_{|})=2$ and the Milnor fiber is not path connected. Where $f_{|}$ is the Milnor fibration (\ref{MasseyII}). 
\end{prop}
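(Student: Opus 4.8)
The plan is to combine the two results that are explicitly cited in the sentence preceding the statement, namely Example~\ref{example:cohomology-trivial} and Proposition~\ref{sec-milnor-fibration}. Under the hypotheses of Proposition~\ref{prop:milnor-tube-con-fiber} we are in the case $p=2$, so the base of the Milnor fibration is $S^1_\eta$ and the relevant cohomological degree is $p-1=1$. First I would invoke Example~\ref{example:cohomology-trivial} with $R$ the given coefficient ring: since $H^1\left(B_{\epsilon}^n\cap f^{-1}(S^1_\eta);R\right)=0$ by assumption, the induced map $f_{|}^\ast$ kills the fundamental class $\alpha\in H^1(S^1_\eta;R)$, and the example yields $\text{sec}(f_{|})=2$ (the lower bound $\geq 2$ from Lemma~\ref{lem:sec-cat}(2) and the upper bound $\leq\text{cat}(S^1_\eta)=2$ from Lemma~\ref{lem:sec-cat}(1)).

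Next I would feed this into Proposition~\ref{sec-milnor-fibration}, whose hypotheses — $f=(f_1,f_2)\colon(\mathbb{R}^n,0)\to(\mathbb{R}^2,0)$ with $n>2$, Milnor's conditions $(a)$ and $(b)$, and all cells of $K$ of dimension $\leq n-3$ — are exactly those we have assumed here. That proposition states $\text{sec}(f_{|})=2$ if and only if the Milnor fiber is not path connected; having just established $\text{sec}(f_{|})=2$, I conclude that the Milnor fiber is not path connected. This gives both assertions of the statement, so the proof is short.

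There is essentially no obstacle: the content is entirely in Example~\ref{example:cohomology-trivial} and Proposition~\ref{sec-milnor-fibration}, which do all the work, and Proposition~\ref{prop:milnor-tube-con-fiber} is just their composition. The only point worth a half-sentence of care is checking that the coefficient ring $R$ is handled uniformly — Example~\ref{example:cohomology-trivial} is stated for an arbitrary coefficient ring, so no restriction is needed — and that the cell-dimension hypothesis on $K$ is needed only to apply Proposition~\ref{sec-milnor-fibration} (Example~\ref{example:cohomology-trivial} itself does not use it), which is fine since we have assumed it. So the write-up is simply: apply Example~\ref{example:cohomology-trivial} to get $\text{sec}(f_{|})=2$, then apply Proposition~\ref{sec-milnor-fibration} to deduce that the Milnor fiber is not path connected.
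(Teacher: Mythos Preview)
Your proposal is correct and matches the paper's own approach exactly: the paper states this proposition as ``a direct application of Proposition~\ref{sec-milnor-fibration} together with Example~\ref{example:cohomology-trivial}'' and gives no further proof. Your write-up simply spells out that direct application, with the same two ingredients in the same order.
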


\subsection{Topological complexity} For a topological space $X$, recall that $X^{[0,1]}$ denote the space of all paths in $X$ with the compact-open topology. Consider the evaluation fibration \begin{equation*}
    e_{0,1}:X^{[0,1]}\to X\times X,~e_{0,1}(\gamma)=\left(\gamma(0),\gamma(1)\right).
\end{equation*} For any map$f:X\to Y$, consider the composite 
\begin{equation*}
    e_{f}:X^{[0,1]}\to X\times Y,~e_{f}=(1_X\times f)\circ e_{0,1}.
\end{equation*}

A \textit{tasking  planning  algorithm} is a function $s\colon X\times Y\to X^{[0,1]}$ satisfying $e_f\circ s=1_{X\times Y}$. Note that, by definition, if there is a tasking planning algorithm of $f$, then $f$ is surjective. Furthermore, we have the following statement.

\begin{lema}\label{lem:tasking-algorithm-implies-section-contractible}
Let $f:X\to Y$ be any map. If a continuous tasking planning algorithm of $f$ exists, then $f$ admits a global cross-section and the space $Y$ is contractible.    
\end{lema}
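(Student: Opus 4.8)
The plan is to extract the two conclusions — existence of a global cross-section and contractibility of $Y$ — directly from a continuous tasking planning algorithm $s\colon X\times Y\to X^{[0,1]}$ satisfying $e_f\circ s=1_{X\times Y}$. Writing out what $e_f\circ s=1_{X\times Y}$ means: for every $(x,y)\in X\times Y$, the path $s(x,y)$ starts at $x$ and its endpoint $s(x,y)(1)$ satisfies $f(s(x,y)(1))=y$. So the algorithm already packages, for each target $y$, a preimage under $f$ together with a way of reaching it.

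First I would produce the cross-section. Fix any basepoint $x_0\in X$ and define $\sigma\colon Y\to X$ by $\sigma(y)=s(x_0,y)(1)$. This is continuous as a composite of $s$ (restricted to $\{x_0\}\times Y$), the evaluation-at-$1$ map $X^{[0,1]}\to X$, and the inclusion $Y\hookrightarrow X\times Y$; and $f(\sigma(y))=f(s(x_0,y)(1))=y$ by the defining identity, so $f\circ\sigma=1_Y$ and $\sigma$ is a global cross-section of $f$. (Here one uses that $X$ is nonempty, which is automatic since a tasking planning algorithm forces $f$ — hence $X$ — to be nonempty.)

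Next I would contract $Y$. The natural candidate homotopy is $H\colon Y\times[0,1]\to Y$ given by $H(y,t)=f\big(s(x_0,y)(t)\big)$. Continuity follows from continuity of $s$ together with continuity of the evaluation map $X^{[0,1]}\times[0,1]\to X$, $(\gamma,t)\mapsto\gamma(t)$, and of $f$. At $t=0$ we get $H(y,0)=f(s(x_0,y)(0))=f(x_0)$, a constant, and at $t=1$ we get $H(y,1)=f(s(x_0,y)(1))=y$ by the defining identity. Hence $1_Y\simeq\text{const}_{f(x_0)}$ and $Y$ is contractible.

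The only genuinely delicate point is the continuity of $H$ (and of $\sigma$): one must invoke that for a reasonable space — compactly generated, or Hausdorff locally compact — the evaluation map $X^{[0,1]}\times[0,1]\to X$ is continuous in the compact-open topology, so that $H$ is a composite of continuous maps. This is the standard exponential-law fact and is the step I would be most careful to state; everything else is a direct unwinding of the identity $e_f\circ s=1_{X\times Y}$.
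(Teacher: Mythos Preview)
Your proof is correct and follows essentially the same argument as the paper: fix $x_0\in X$, take $\sigma(y)=s(x_0,y)(1)$ as the global cross-section, and use $H(y,t)=f(s(x_0,y)(t))$ as the contracting homotopy. Your caution about the continuity of the evaluation map is unnecessary here, since continuity of $X^{[0,1]}\times[0,1]\to X$ in the compact-open topology requires only that $[0,1]$ be locally compact Hausdorff, which is automatic---no hypothesis on $X$ is needed.
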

\begin{proof}
    Suppose that $\sigma:X\times Y\to X^{[0,1]}$ is a global cross-section of $e_f$, that is, $\sigma$ is a map and for any $(x,y)\in X\times Y$, we have that $\sigma(x,y)(0)=x$ and $f(\sigma(x,y)(1))=y$. Set $x_0\in X$ and consider the map $s:Y\to X,~s(y)=\sigma(x_0,y)(1)$. Note that $s$ is a global cross-section of $f$. In addition, the homotopy $H:Y\times [0,1]\to Y$, given by $H(y,t)=f(\sigma(x_0,y)(t))$, satisfies $H(y,0)=f(x_0)$ and $H(y,1)=y$, for any $y\in Y$. Hence, $Y$ is contractible. 
\end{proof}

Lemma~\ref{lem:tasking-algorithm-implies-section-contractible} forces the following definition (cf. \cite{pavesic}, \cite{zapatasectional}).

\begin{defi}\label{def:tc}\rm{
The \textit{topological complexity} TC$(f)$ of a map$f:X\to Y$ is the sectional number of the map $e_f$. In other words, the topological complexity of $f$ is the smallest positive integer TC$(f)=k$ for which  the product $X\times Y$ is covered by $k$ open subsets $X\times Y=U_1\cup\cdots\cup U_k$ such that for any $i=1,2,\ldots,k$ there exists a local cross-section $s_i:U_i\to X^{[0,1]}$ of $e_f$ over $U_i$ (i.e., $e_f\circ s_i=incl_{U_i}$). If no such $k$ exists we will set TC$(f)=\infty$.}
\end{defi}
 
Note that, any collection $s=\{s_i:U_i\to X^{[0,1]}\}_{i=1}^{k}$, where $\{U_i\}_{i=1}^{k}$ is an open covering of $X\times Y$ and each $s_i:U_i\to X^{[0,1]}$ is a local cross-section of $e_f$ over $U_i$; induces a tasking planning algorithm $s:X\times Y\to X^{[0,1]}$ given by $s(x,y)=s_i(x,y)$ where $i$ is the minimal index in such that $(x,y)\in U_i$. Any such tasking planning algorithm $s:=\{s_i:U_i\to X^{[0,1]}\}_{i=1}^{k}$ is called \textit{optimal} if $k=\text{TC}(f)$.

\medskip For convenience, we record the following standard properties of topological complexity (see \cite{zapatasectional}):

\begin{rema}\label{rem:properties}
Let $f:X\to Y$ be a map.
\begin{enumerate}
\item  $\max\{\text{cat}(Y),\text{sec}(f)\}\leq\text{TC}(f)$.
    \item If $f$ admits a global cross-section, then
 $\text{TC}(Y)\leq\text{TC}(f)\leq \text{TC}(X)$.
\end{enumerate}
\end{rema}

The topological complexity of the identity map $1_X:X\to X$, $\text{TC}(1_X)=\text{TC}(X)$, coincides with the topological complexity (a la Farber) of $X$ (see Farber \cite{farber2003topological}). In this case the tasking planning problem is called the \textit{motion planning problem}. 

\medskip For our aim, we recall the topological complexity of spheres together with their optimal motion planning algorithms. Recall the stereographic projection with respect to the north pole $p_N=(0,\ldots,0,1)$: \[p:S^m-\{p_N\}\to \mathbb{R}^m,~(x_1,\ldots,x_{m+1})=\left(\dfrac{x_1}{1-x_{m+1}},\ldots,\dfrac{x_m}{1-x_{m+1}}\right)\] whose inverse $q:\mathbb{R}^m\to S^m-\{p_N\}$ is given by \[q(y_1,\ldots,y_m)=\left(\dfrac{2y_1}{\parallel y\parallel^2+1},\ldots,\dfrac{2y_m}{\parallel y\parallel^2+1},\dfrac{\parallel y\parallel^2-1}{\parallel y\parallel^2+1} \right).\]  

\begin{exam}[Motion planning over spheres]\label{spheres}
From Farber \cite{farber2003topological}, the TC for spheres is as follows:
\[ \text{TC}(S^m)=
\begin{cases}
2, & \hbox{ if $m$ is odd;}\\
3, & \hbox{ if $m$ is even.}
\end{cases} \]

Furthermore:
\begin{enumerate}
    \item[(1)] An optimal motion planning algorithm to $S^m$ with $m$ odd is given by $s:=\{s_i:U_i\to \left(S^m\right)^{[0,1]}\}_{i=1}^{2}$, where \begin{eqnarray*}
U_1&=&  \{(\theta_1,\theta_2)\in S^m\times S^m\mid ~~\theta_1\neq -\theta_2\},\\
U_2&=& \{(\theta_1,\theta_2)\in S^m\times S^m\mid ~~\theta_1\neq \theta_2\},
\end{eqnarray*} with local algorithms: 
$$ 
s_1(\theta_1,\theta_2)(t) = \dfrac{(1-t)\theta_1+t\theta_2}{\parallel (1-t)\theta_1+t\theta_2 \parallel} \text{ for all } (\theta_1,\theta_2)\in U_1;
$$ and for all  $(\theta_1,\theta_2)\in U_2$, 
$$ 
s_2(\theta_1,\theta_2)(t) =  \begin{cases} 
s_1(\theta_1,-\theta_2)(2t), &\hbox{ $0\leq t\leq\dfrac{1}{2}$;}\\
\alpha(-\theta_2,\theta_2)(2t-1), &\hbox{ $\dfrac{1}{2}\leq t\leq 1$,}
\end{cases} $$ where, we consider the subset $F:=\{(\theta_1,\theta_2)\in S^m\times S^m\mid ~~\theta_1= -\theta_2\} $ and for $(\theta_1,\theta_2)\in F$ the map 
$$
\alpha(\theta_1,\theta_2)(t) =  \begin{cases} 
s_1(\theta_1,v(\theta_1))(2t), &\hbox{ $0\leq t\leq\dfrac{1}{2}$;}\\
s_1(v(\theta_1),\theta_2)(2t-1), &\hbox{ $\dfrac{1}{2}\leq t\leq 1$.}
\end{cases} $$ Here, $v$ denote a fixed continuous unitary tangent vector field on $S^{m}$, say $v(x_1,y_1,\ldots,x_\ell,y_\ell)=(-y_1,x_1,\ldots,-y_\ell,x_\ell)$ with $m+1=2\ell$. 
\item[(2)] An optimal motion planning algorithm to $S^m$ with $m$ even is given by $\kappa:=\{\kappa_i:V_i\to \left(S^m\right)^{[0,1]}\}_{i=1}^{3}$, where
\begin{eqnarray*}
V_1&=& \left(S^m\setminus \{p_N\}\right)\times \left(S^m\setminus \{p_N\}\right),\\
V_2&=&  \{(\theta_1,\theta_2)\in S^m\times S^m\mid ~~\theta_1\neq -\theta_2\},\\
V_3&=& \{(\theta_1,\theta_2)\in S^m\times S^m\mid ~~\theta_1\neq \theta_2 \text{ and } \theta_2\neq -1,1\},
\end{eqnarray*} with local algorithms: 
\begin{eqnarray*}
   \kappa_1(\theta_1,\theta_2)(t) &=& q\left((1-t)p(\theta_1)+tp(\theta_2)\right) \text{ for } (\theta_1,\theta_2)\in V_1 \text{ and } t\in[0,1];\\ 
   \kappa_2(\theta_1,\theta_2)(t) &=& \dfrac{(1-t)\theta_1+t\theta_2}{\parallel (1-t)\theta_1+t\theta_2 \parallel} \text{ for all } (\theta_1,\theta_2)\in V_2;
\end{eqnarray*}
and for all  $(\theta_1,\theta_2)\in V_3$, 
$$ 
\kappa_3(\theta_1,\theta_2)(t) =  \begin{cases} 
\kappa_2(\theta_1,-\theta_2)(2t), &\hbox{ $0\leq t\leq\dfrac{1}{2}$;}\\
\beta(-\theta_2,\theta_2)(2t-1), &\hbox{ $\dfrac{1}{2}\leq t\leq 1$,}
\end{cases} $$ where we consider the subset $G:=\{(\theta_1,\theta_2)\in S^m\times S^m\mid ~~\theta_1= -\theta_2 \text{ and } \theta_2\neq -1,1\} $ and for $(\theta_1,\theta_2)\in G$ the map 
$$
\beta(\theta_1,\theta_2)(t) =  \begin{cases} 
\kappa_2(\theta_1,\nu(\theta_1))(2t), &\hbox{ $0\leq t\leq\dfrac{1}{2}$;}\\
\kappa_2(\nu(\theta_1),\theta_2)(2t-1), &\hbox{ $\dfrac{1}{2}\leq t\leq 1$.}
\end{cases} $$ Here, $\nu:S^m\to \mathbb{R}^{m+1}$ denotes the continuous tangent vector field on $S^{m}$, given by $$\nu(x_1,x_2,x_3,\ldots,x_{m},x_{m+1})=(0,-x_3,x_2,\ldots,-x_{m+1},x_{m}).$$ Note that $\nu(1,0,\ldots,0)=0$, $\nu(-1,0\ldots,0)=0$ and $\nu(x)\neq 0$ for any $x\in S^m-\{-1,1\}$.   
\end{enumerate} 
\end{exam}

\medskip The proof of the following proposition is also fundamental for our purposes. 

\begin{prop}\label{princ-prop}
If $p:E\to B$ is a fibration, then \[\text{TC}(p)\leq\text{TC}(B).\] Furthermore, if $p$ admits a global cross-section or $\text{cat}(B)=\text{TC}(B)$, then \[\text{TC}(p)=\text{TC}(B).\]
\end{prop}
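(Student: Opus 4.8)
The plan is to realize $e_p\colon E^{[0,1]}\to E\times B$ as the left vertical arrow of a quasi pullback square whose right vertical arrow is the evaluation fibration $e_{0,1}\colon B^{[0,1]}\to B\times B$, and then to invoke Lemma~\ref{lem:pullback} together with Definition~\ref{def:tc}. Recall that, by Definition~\ref{def:tc}, $\mathrm{TC}(p)=\mathrm{sec}(e_p)$, while $\mathrm{TC}(B)=\mathrm{TC}(1_B)=\mathrm{sec}(e_{0,1})$ since $e_{1_B}=(1_B\times 1_B)\circ e_{0,1}=e_{0,1}$. I would take the bottom arrow to be $p\times 1_B\colon E\times B\to B\times B$ and the top arrow to be the map $p_{*}\colon E^{[0,1]}\to B^{[0,1]}$, $\delta\mapsto p\circ\delta$, induced by $p$ on free path spaces. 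The square
\[
\xymatrix{ E^{[0,1]} \ar[r]^{p_{*}} \ar[d]_{e_p} & B^{[0,1]} \ar[d]^{e_{0,1}} \\
           E\times B \ar[r]_{p\times 1_B} & B\times B }
\]
commutes, because for $\delta\in E^{[0,1]}$ both composites send $\delta$ to $\bigl(p(\delta(0)),p(\delta(1))\bigr)$.

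The heart of the argument — and the step I expect to require the most care — is to verify that this square is a quasi pullback, and this is exactly where the hypothesis that $p$ is a fibration is used. Given a space $Z$ and maps $\alpha=(\alpha_E,\alpha_B)\colon Z\to E\times B$ and $\beta\colon Z\to B^{[0,1]}$ with $e_{0,1}\circ\beta=(p\times 1_B)\circ\alpha$, the compatibility condition unwinds to $\beta(z)(0)=p(\alpha_E(z))$ and $\beta(z)(1)=\alpha_B(z)$ for all $z\in Z$. Passing to adjoints, $\beta$ corresponds to a homotopy $\widehat{\beta}\colon Z\times[0,1]\to B$ with $\widehat{\beta}(-,0)=p\circ\alpha_E$. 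Since $p$ is a fibration, the homotopy lifting property produces a lift $\widetilde{\beta}\colon Z\times[0,1]\to E$ with $\widetilde{\beta}(-,0)=\alpha_E$ and $p\circ\widetilde{\beta}=\widehat{\beta}$. The adjoint $h\colon Z\to E^{[0,1]}$ of $\widetilde{\beta}$ then satisfies $h(z)(0)=\alpha_E(z)$, $p(h(z)(1))=\widehat{\beta}(z,1)=\alpha_B(z)$, and $p\circ h(z)=\beta(z)$; hence $e_p\circ h=\alpha$ and $p_{*}\circ h=\beta$, which is precisely what the definition of quasi pullback demands. Consequently Lemma~\ref{lem:pullback} yields $\mathrm{sec}(e_p)\leq\mathrm{sec}(e_{0,1})$, that is, $\mathrm{TC}(p)\leq\mathrm{TC}(B)$.

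For the two equality statements I would combine this inequality with the standard lower bounds of Remark~\ref{rem:properties}. If $p$ admits a global cross-section, then Remark~\ref{rem:properties}(2) applied to $p\colon E\to B$ gives $\mathrm{TC}(B)\leq\mathrm{TC}(p)$, so $\mathrm{TC}(p)=\mathrm{TC}(B)$. If instead $\mathrm{cat}(B)=\mathrm{TC}(B)$, then Remark~\ref{rem:properties}(1) gives $\mathrm{cat}(B)\leq\mathrm{TC}(p)$, whence $\mathrm{TC}(B)=\mathrm{cat}(B)\leq\mathrm{TC}(p)\leq\mathrm{TC}(B)$ and again equality holds. Apart from the quasi pullback verification, the only technical point is the routine adjunction/continuity bookkeeping when moving between $B^{[0,1]}$, $E^{[0,1]}$ and homotopies $Z\times[0,1]\to(-)$, which is harmless because $[0,1]$ is locally compact Hausdorff; everything else is formal once the quasi pullback square is established.
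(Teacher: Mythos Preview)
Your proposal is correct and follows essentially the same approach as the paper: you build the same commutative square with $e_p$ and $e_{0,1}$ as vertical arrows and $p\times 1_B$ on the bottom, verify it is a quasi pullback via the homotopy lifting property of $p$, apply Lemma~\ref{lem:pullback}, and then deduce the equality cases from Remark~\ref{rem:properties}. The only differences are cosmetic (you write $p_*$ where the paper writes $p_\#$, and you spell out the adjunction and the two equality cases in slightly more detail than the paper does).
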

\begin{proof} We will check $\text{TC}(p)\leq \text{TC}(B)$. By Lemma~\ref{lem:pullback}, it is sufficient to prove that the following diagram:
 \begin{eqnarray*}
\xymatrix{ E^{[0,1]} \ar[r]^{\,\,p_{\#}} \ar[d]_{e_{p}} & B^{[0,1]} \ar[d]^{e_{0,1}} & \\
       E\times B  \ar[r]_{\,\, p\times 1_{B}} &  B\times B &}
\end{eqnarray*}
 is a quasi pullback. Indeed, for any $\beta:X\to B^{[0,1]}$ and any $\alpha:X\to E\times B$ satisfying $e_{p}\circ\beta=(p\times 1_{B})\circ\alpha$, we need to see that there exists $H:X\to E^{[0,1]}$ such that
$e_{p}\circ H=\alpha$ and $p_{\#}\circ H=\beta$.
\begin{eqnarray*}
\xymatrix{ X \ar@/^10pt/[drr]^{\,\,\beta} \ar@/_10pt/[ddr]_{\alpha} \ar@{-->}[dr]_{H} &   &  &\\
& E^{[0,1]} \ar[r]^{\,\,p_{\#}} \ar[d]^{e_{p}} & B^{[0,1]} \ar[d]^{e_{0,1}} & \\
       & E\times B   \ar[r]_{\quad p\times 1_{B}\quad} &  B\times B &}
\end{eqnarray*} Note that we have the following commutative diagram:
\begin{eqnarray*}
\xymatrix{ X \ar[r]^{\,\,p_{1}\circ\alpha} \ar[d]_{i_0} & E \ar[d]^{p} \\
       X\times I  \ar[r]_{\,\,\beta} &  B}
\end{eqnarray*} where $p_1$ is the projection onto the first coordinate. Because $p$ is a fibration, there exists $H:X\times I\to E$ satisfying $H\circ i_0=p_1\circ\alpha$ and $p\circ H=\beta$, thus we obtain the desired quasi pullback.

The second part of the proposition follows from Remark~\ref{rem:properties}.
  
\end{proof}

\section{Topological complexity of Milnor fibration}\label{sec:tc-milnor-fibration}

Now, we are ready to study the tasking planning problem of Milnor fibrations. Explicitly, we compute the topological complexity of Milnor fibration and we design optimal tasking planning algorithms (Theorem~\ref{thm:principal-theorem}). By the proof of Theorem~\ref{thm:principal-theorem} together with Remark~\ref{explicit-H} this algorithms strongly depends of the Milnor fibration. 

\medskip We consider $n> p\geq 2$ and $f:(\mathbb{R}^n,0)\longrightarrow (\mathbb{R}^p,0)$ be an analytic map germ which satisfies the Milnor's conditions $(a)$ and $(b)$. In particular, we consider the Milnor fibration as work map: \begin{equation*}\label{milnor-fibration}
   f_{\mid}:B^{n}_{\epsilon}\cap f^{-1}(S^{p-1}_{\eta})\to S^{p-1}_{\eta},  
\end{equation*}
where $0<\eta\ll\epsilon\leq \epsilon_0,$ and $\epsilon_0$ is a Milnor's radius for $f$ at origin. We consider that $E=E(\eta,\epsilon)= B^{n}_{\epsilon}\cap f^{-1}(S^{p-1}_{\eta})$ denotes the Milnor tube.

\begin{rema}\label{rem:lower-tc}
\noindent\begin{enumerate}
    \item[(1)]  By Lemma~\ref{lem:sec-cat}(1) together with Remark~\ref{rem:properties}(1),  Proposition~\ref{princ-prop} and Example~\ref{spheres}, we obtain \[2=\text{cat}(S^{p-1})\leq\text{TC}(f_{\mid})\leq\text{TC}(S^{p-1})\leq 3\] and thus $\text{TC}(f_{\mid})\in\{2,3\}$.
    \item[(2)] By the proof of Proposition~\ref{princ-prop}, any optimal algorithm from $S^{p-1}$ induces an algorithm to $f_{\mid}$ not necessarily optimal. However, it is optimal when $f_{\mid}$ admits a global cross-section or $p$ is even.  
\end{enumerate}  
\end{rema}

Now, we compute the value of $\text{TC}(f_{\mid})$ together with its optimal tasking planning algorithm when \begin{enumerate}
    \item $f:(\mathbb{R}^n,0)\to (\mathbb{R}^p,0)$, $n>p\geq 2$, is an analytic map germ with $p$ even (not necessarily with an  isolated singular point at origin).
    \item $f:(\mathbb{R}^n,0)\to (\mathbb{R}^p,0)$, $n>p\geq 2$, is an analytic map germ with $p$ odd, with an isolated singular point at origin and not empty link $K = f^{-1}(0) \cap S_{\epsilon}^{n - 1}$.
    \item $f:(\mathbb{R}^n,0)\to (\mathbb{R}^p,0)$, $n>p\geq 2$, is an analytic map germ with $p$ odd, with all cells of $K$ have dimensions $\leq n-p-1$ and $\pi_{p-2}(F)=0$. 
\end{enumerate}

\begin{theorem}[Principal Theorem]\label{thm:principal-theorem}
 We have $$\text{TC}(f_{\mid})=\begin{cases}
 2, & \hbox{ if $f$ is as (1);}\\
 3, & \hbox{ if $f$ is as (2) or (3).}
 \end{cases}$$ Furthermore, we design optimal tasking planning algorithms with $2$ and $3$ regions of continuity, respectively.  
\end{theorem}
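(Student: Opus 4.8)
The plan is to sandwich $\text{TC}(f_{\mid})$ between the upper bound $\text{TC}(f_{\mid})\le\text{TC}(S^{p-1})$ coming from Proposition~\ref{princ-prop} and a matching lower bound, and then to manufacture the algorithms by lifting the sphere algorithms of Example~\ref{spheres} along the quasi pullback used in the proof of Proposition~\ref{princ-prop}. Write $B=S^{p-1}_\eta$ and $E=B^n_\epsilon\cap f^{-1}(S^{p-1}_\eta)$, so $f_{\mid}:E\to B$ is the Milnor fibration. By Remark~\ref{rem:lower-tc}(1) we already have $2\le\text{TC}(f_{\mid})\le\text{TC}(S^{p-1})\le 3$, so only the exact value and the explicit planner remain to be settled.

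\emph{Case (1): $p$ even.} Then $S^{p-1}$ is an odd sphere, so $\text{TC}(S^{p-1})=2$ by Example~\ref{spheres}, and the displayed chain collapses to $\text{TC}(f_{\mid})=2$. For the optimal planner I take the two sets $U_1,U_2$ and local sections $s_1,s_2$ of Example~\ref{spheres}(1), put $\widetilde U_i=(f_{\mid}\times 1_B)^{-1}(U_i)\subseteq E\times B$, and for $(\mathbf e,\theta)\in\widetilde U_i$ lift the path $s_i(f_{\mid}(\mathbf e),\theta)$ — which starts at $f_{\mid}(\mathbf e)$ — through the fibration $f_{\mid}$ with initial point $\mathbf e$, using the lift provided by Remark~\ref{explicit-H} (this is precisely the specialization of the proof of Proposition~\ref{princ-prop}, with $Z=\widetilde U_i$, to our situation). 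This defines the local tasking algorithm $\widetilde s_i:\widetilde U_i\to E^{[0,1]}$, and $\{\widetilde s_1,\widetilde s_2\}$ is an optimal two-region tasking planner.

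\emph{Cases (2) and (3): $p$ odd} (hence $p\ge 3$). Now $S^{p-1}$ is an even sphere, $\text{TC}(S^{p-1})=3$, while $\text{cat}(S^{p-1})=2$, so the second part of Proposition~\ref{princ-prop} does not apply for free; to force $\text{TC}(f_{\mid})=3$ I will show that $f_{\mid}$ admits a global cross-section and then invoke Remark~\ref{rem:properties}(2) to get $\text{TC}(f_{\mid})\ge\text{TC}(S^{p-1})=3$. In case (3) the section is immediate from Theorem~\ref{cross-milnor-homotopy-group}, whose two hypotheses — all cells of $K$ of dimension $\le n-p-1$, and $\pi_{p-2}(F)=0$ — are exactly the standing assumptions, and it delivers both $\pi_{p-2}(E)=0$ and the cross-section. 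In case (2) I construct the section geometrically: since $K\ne\emptyset$, the zero set $f^{-1}(0)$ is not reduced to the origin nearby, so choose $x_0\in f^{-1}(0)$ with $0<\|x_0\|<\epsilon$; as the origin is the only critical point of $f$, the map $f$ is a submersion at $x_0$, so by the implicit function theorem there is a small smooth $p$-disk $D\subseteq\mathrm{int}(B^n_\epsilon)$ through $x_0$ with $f|_D$ a diffeomorphism onto a neighbourhood of $0$ in $\mathbb{R}^p$; shrinking $\eta$ if necessary (permissible by the Milnor fibration theorem) so that $S^{p-1}_\eta\subseteq f(D)$, the $(p-1)$-sphere $\Sigma:=(f|_D)^{-1}(S^{p-1}_\eta)$ lies in $E$ and is mapped diffeomorphically onto $S^{p-1}_\eta$ by $f_{\mid}$, whence $s:=(f_{\mid}|_\Sigma)^{-1}$ is a global cross-section. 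In either case the optimal three-region tasking planner is obtained, exactly as in Case (1), by lifting the sets $V_1,V_2,V_3$ and local sections $\kappa_1,\kappa_2,\kappa_3$ of Example~\ref{spheres}(2) through $f_{\mid}$ via Remark~\ref{explicit-H}.

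The point requiring the most care is Case (2): one must verify that a single $\eta$ can be chosen small enough both to keep (\ref{MasseyII}) a smooth locally trivial bundle and to guarantee $S^{p-1}_\eta\subseteq f(D)$, so that $\Sigma$ really does sit inside the Milnor tube $E$ associated to that same $\eta$; this is where the $0<\eta\ll\epsilon$ flexibility of the Milnor fibration theorem is used essentially. A secondary subtlety, common to all cases, is that the path lift through $f_{\mid}$ furnished by Remark~\ref{explicit-H} is not canonical, so the resulting tasking planners are explicit only after a path-lifting function for the Milnor fibration has been fixed — which is exactly the sense in which the algorithms depend on the Milnor fibration theorem.
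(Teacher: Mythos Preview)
Your proof is correct and follows essentially the same route as the paper: both use Proposition~\ref{princ-prop} for the upper bound, Example~\ref{spheres} for the value of $\text{TC}(S^{p-1})$, produce a global cross-section in cases (2) and (3) to force the lower bound, and lift the sphere planners through the quasi pullback of Remark~\ref{explicit-H}. The only difference is presentational: in case~(2) the paper simply cites \cite[p.~101]{Milnor} for the cross-section, whereas you spell out Milnor's implicit-function-theorem construction and note the compatibility of the choice of $\eta$ with the Milnor fibration theorem --- a point the paper leaves implicit in the citation.
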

\begin{proof}
The case (1) follows directly from Proposition~\ref{princ-prop} (by Example~\ref{spheres}, recall that $\text{TC}(S^{p-1})=2$ for any $p$ even). The case (2) and (3) is more difficult, however, by the Milnor fibration theory we have that $f_{\mid}$ admits a global cross-section whenever the link $K = f^{-1}(0) \cap S_{\epsilon}^{n - 1}$ is not empty, see \cite[p. 101]{Milnor} and Theorem~\ref{cross-milnor-homotopy-group}, respectively. Then we can apply again Proposition~\ref{princ-prop} and thus Example \ref{spheres} implies the result.

Now, we present optimal tasking planning algorithms with $2$ and $3$ regions of continuity for the Milnor fibration $f_{\mid}:E(\eta,\epsilon)\to S^{p-1}$, respectively. Recall the following quasi pullbacks:
\begin{eqnarray*}
\xymatrix{ B^\prime\times_B E \ar@/^10pt/[drr]^{\,\,q_2} \ar@/_10pt/[ddr]_{q_1} \ar@{-->}[dr]_{H} &   &  &\\
& \left(E(\eta,\epsilon)\right)^{[0,1]} \ar[r]^{\,\,(f_{\mid})_{\#}} \ar[d]_{e_{f_{\mid}}} & \left(S^{p-1}\right)^{[0,1]} \ar[d]^{e_{0,1}} & \\
       & E(\eta,\epsilon)\times S^{p-1}   \ar[r]_{\quad f_{\mid}\times 1\quad} &  S^{p-1}\times S^{p-1} &}
\end{eqnarray*} where $B^\prime=E(\eta,\epsilon)\times S^{p-1}$, $B=S^{p-1}\times S^{p-1}$ and $E= \left(S^{p-1}\right)^{[0,1]}$. Recall that $H$ is given by the following commutative diagram (see Remark~\ref{explicit-H}):
\begin{eqnarray*}
\xymatrix{ X \ar[r]^{\,\,p_{1}\circ q_1} \ar[d]_{i_0} & E(\eta,\epsilon) \ar[d]^{f_{\mid}} \\
       X\times I\ar@{-->}[ur]_{H}  \ar[r]_{\,\,q_2} &  S^{p-1}}
\end{eqnarray*} where $X=B^\prime\times_B E$.

\textbf{For $p$ even:} From Proposition~\ref{princ-prop}, the optimal algorithm $s:=\{s_i:U_i\to PS^{p-1}\}_{i=1}^{2}$ on the sphere $S^{p-1}$ (see Example \ref{spheres}) induces an optimal tasking algorithm for $f_{\mid}$ with 2 local algorithms, say $\hat{s}:=\{\hat{s}_i:\widehat{V}_i\to \left(E(\eta,\epsilon)\right)^{[0,1]}\}_{i=1}^{2}$, where $\widehat{V}_i=(f_{\mid}\times 1)^{-1}(U_i)\subset E(\eta,\epsilon)\times S^{p-1}$ and $\hat{s}_i(v)=H(v,s_i\circ(f_{\mid}\times 1)(v))$.

\textbf{For $p$ odd and under conditions (2) and (3):} Again, from Proposition~\ref{princ-prop}, the optimal algorithm $\kappa:=\{\kappa_i:V_i\to PS^{p-1}\}_{i=1}^{3}$ on the sphere $S^{p-1}$ (see Example \ref{spheres}) induces an optimal tasking algorithm for $f_{\mid}$ with 3 local algorithms, say $\hat{\kappa}:=\{\hat{\kappa}_i:\widehat{V}_i\to \left(E(\eta,\epsilon)\right)^{[0,1]}\}_{i=1}^{3}$, where $\widehat{V}_i=(f_{\mid}\times 1)^{-1}(V_i)\subset E(\eta,\epsilon)\times S^{p-1}$ and $\hat{s}_i(v)=H(v,\kappa_i\circ(f_{\mid}\times 1)(v))$.
\end{proof}

\begin{rema}\label{rem:depends}
    By the proof of Theorem~\ref{thm:principal-theorem} together with Remark~\ref{explicit-H} these tasking planning algorithms strongly depend of the Milnor fibration. Hence, we stay the following question.
\end{rema} 

\medskip\textbf{Question:} Let $f = (f_1, \ldots, f_p): (\mathbb{R}^n, 0) \to (\mathbb{R}^p, 0)$ be an analytic map germ, and $\epsilon_0$ be a Milnor radius for $f$ at the origin. Given any commutative diagram \begin{eqnarray*}
\xymatrix{ &Z \ar[rr]^{\varphi} \ar[d]_{j_0} & & B_{\epsilon}^n \cap f^{-1}(S_{\eta}^{p-1})\ar[d]^{f_{|}} & \\
       &Z\times [0,1]  \ar[rr]_{H} & &  S_{\eta}^{p-1} &  }
\end{eqnarray*}  where $j_0(z)=(z,0)$. How to construct a map$\widetilde{H}:Z\times [0,1]\to B_{\epsilon}^n \cap f^{-1}(S_{\eta}^{p-1})$,

\begin{eqnarray*}
\xymatrix{ &Z \ar[rr]^{\varphi} \ar[d]_{j_0} & & B_{\epsilon}^n \cap f^{-1}(S_{\eta}^{p-1})\ar[d]^{f_{|}} & \\
       &Z\times [0,1]\ar@{-->}[urr]^{\widetilde{H}}   \ar[rr]_{H} & &  S_{\eta}^{p-1} &  }
\end{eqnarray*} such that $\widetilde{H}(z,0)=\varphi(z),~\forall z\in Z$ and $f_{|}\circ \widetilde{H}=H$?

\medskip This question together with Theorem~\ref{thm:principal-theorem} of course suits best for future implementation-oriented objectives.

\medskip Theorem~\ref{thm:principal-theorem} implies the following statement.

\begin{cor}\label{cor:tc-tcbase}
  Under the hypothesis of Theorem~\ref{thm:principal-theorem} we have that the equality \[\text{TC}(f_{\mid})=\text{TC}(S^{p-1})\] holds. 
\end{cor}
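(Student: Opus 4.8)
The plan is to read off Corollary~\ref{cor:tc-tcbase} directly from the case analysis already carried out in Theorem~\ref{thm:principal-theorem}, matching each case against the value of $\text{TC}(S^{p-1})$ recorded in Example~\ref{spheres}. First I would recall that Example~\ref{spheres} gives $\text{TC}(S^{p-1})=2$ when $p-1$ is odd, i.e.\ when $p$ is even, and $\text{TC}(S^{p-1})=3$ when $p-1$ is even, i.e.\ when $p$ is odd. So the statement $\text{TC}(f_{\mid})=\text{TC}(S^{p-1})$ is really two assertions: in case~(1) ($p$ even) we need $\text{TC}(f_{\mid})=2$, and in cases~(2) and~(3) ($p$ odd) we need $\text{TC}(f_{\mid})=3$.

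Next I would invoke Theorem~\ref{thm:principal-theorem}: it asserts precisely $\text{TC}(f_{\mid})=2$ when $f$ is as in~(1) and $\text{TC}(f_{\mid})=3$ when $f$ is as in~(2) or~(3). Matching these with the values of $\text{TC}(S^{p-1})$ from the previous paragraph gives $\text{TC}(f_{\mid})=2=\text{TC}(S^{p-1})$ in case~(1) and $\text{TC}(f_{\mid})=3=\text{TC}(S^{p-1})$ in cases~(2) and~(3). Since the hypotheses of the Corollary are exactly the hypotheses of the Theorem (one of cases~(1), (2), (3)), this exhausts all possibilities, and the equality $\text{TC}(f_{\mid})=\text{TC}(S^{p-1})$ holds in every case. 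Writing this out formally is essentially a one-line dispatch on the parity of $p$.

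There is no real obstacle here: the work was all done in proving Theorem~\ref{thm:principal-theorem} and in Example~\ref{spheres}. The only thing worth being careful about is the parity bookkeeping --- it is the dimension $p-1$ of the base sphere, not $p$ itself, whose parity controls $\text{TC}(S^{p-1})$, so one must not confuse "$p$ even" with "base sphere even-dimensional." Once that translation is made correctly, the two computations line up and the corollary follows immediately. I would also note in passing that the proof of Proposition~\ref{princ-prop} already yields $\text{TC}(f_{\mid})\le\text{TC}(S^{p-1})$ for free (since $f_{\mid}$ is a fibration over a sphere), so the content of the corollary is the reverse inequality in cases~(2) and~(3), which is supplied by the global cross-section argument inside Theorem~\ref{thm:principal-theorem} together with Remark~\ref{rem:properties}(2).
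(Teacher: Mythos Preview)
Your proposal is correct and matches the paper's approach exactly: the paper gives no separate proof of the corollary, simply stating that ``Theorem~\ref{thm:principal-theorem} implies the following statement,'' and your case-by-case matching of the values from Theorem~\ref{thm:principal-theorem} against the values of $\text{TC}(S^{p-1})$ from Example~\ref{spheres} is precisely how that implication is read off. Your parity bookkeeping is also correct.
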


We do not know an analytic map germ $f:(\mathbb{R}^n,0)\to (\mathbb{R}^p,0)$, $n>p\geq 2$, with $p$ odd and not empty link $K = f^{-1}(0) \cap S_{\epsilon}^{n - 1}$, such that $\text{TC}(f_{\mid})=2$. Hence, it yields the following conjecture.

\medskip\textbf{Conjecture:} For any analytic map germ $f:(\mathbb{R}^n,0)\to (\mathbb{R}^p,0)$, $n>p\geq 2$, with $p$ odd and not empty link $K = f^{-1}(0) \cap S_{\epsilon}^{n - 1}$, we have that \[\text{TC}(f_{\mid})=3.\] 

\section*{Conflict of interest}
The authors declare that they have no conflict of interest.


\begin{thebibliography}{G}

\bibitem{TCR} R. N. Ara\'ujo dos Santos, Y. Chen, M. Tibar, \emph{Singular open book structures from real mappings}, Central European Jounal of Mathematics, {\bf 11}, 817-820, 2013.

\bibitem{TR} R. N. Ara\'ujo dos Santos, M. Tibar, \emph{Real map germs and higher open book structures}, Geom. Dedicata, {\bf 147}, 177-185, 2010.

\bibitem{bajdRobotics} T. Bajd, M. Mihelj, J. Lenarcic, A. Stanovnik and M. Munih, \emph{Robotics}, International series on intelligent systems, control, and automation: Science and engineering. {\bf 43} (2010).

\bibitem{cornea2003lusternik} O. Cornea, G. Lupton, J. Oprea and D. Tanr{\'e}, \emph{Lusternik-Schnirelmann Category}, Mathematical Surveys and Monographs, 103 (American Mathematical Society, Providence, RI, 2003).

\bibitem{donelan2010} P.~Donelan, \emph{Kinematic Singularities of Robot Manipulators}, Advances in Robot Manipulators, Ernest Hall (Ed.),
InTechopen, pp. 401-416, 2010. ISBN: 978-953-307-070-4.
 
 \bibitem{farber2003topological} M. Farber, \emph{Topological complexity of motion planning}, Discrete and Computational Geometry. {\bf 29} (2003), no. 2, 211--221.

\bibitem{james1978} I. M. James, \emph{On category, in the sense of Lusternik-Schnirelmann},  Topology 17.4 (1978): 331--348.

\bibitem{latombe2012robot}  J.-C. Latombe, \emph{Robot motion planning} (Springer, New York, 1991). 

\bibitem{lavalle2006planning} S. M. LaValle, \emph{Planning algorithms}  (Cambridge University Press, Cambridge, 2006).

 \bibitem{Massey} D. Massey, \emph{Real Analytic Milnor Fibrations and a Strong {\L}ojasiewicz Inequality}, Real and Complex Singularities, 268-292, London Mathematical Society Lecture Note Series 380. Camdridge: Camdridge University Press, 2010.

 \bibitem{Milnor} J. W. Milnor, \emph{Singular points of complex hypersurfaces.} Ann. of Math. Studies 61, Princeton University Press, 1968.

\bibitem{pavesic} P. Pave{\v{s}}i{\'c}, \emph{A Topologists View of Kinematic Maps and Manipulation Complexity}, Contemp. Math. {\bf 702}(2018), 61--83.

\bibitem{RR} M. A. Ruas, R. N. Ara\'ujo dos Santos, \emph{Real Milnor fibrations and $(C)$-regularity}, Manuscripta Math. 117 (2), 207-218, 2005.

\bibitem{schwarz1966} A. Schwarz, \emph{The genus of a fiber space}. Amer. Math. Soc. Transl. Ser. 2 55(1966), 49--140.

\bibitem{souza2023} T.~O.~Souza and C.~A.~I.~Zapata, \emph{On the Topology of the Milnor Fibration}. Bull Braz Math Soc, New Series 54, 6 (2023). 

\bibitem{zapata2019preprint} C.~A.~I.~Zapata, \emph{Cross-sections of Milnor fibrations and Motion planning}. arXiv preprint arXiv:1910.00157.

\bibitem{zapata2022} C.~A.~I.~Zapata, \emph{Espa\c cos de configura\c c\~oes no problema de planifica\c c\~ao de movimento simult\^aneo livre de colis\~oes}. Ph.D thesis, Universidade de S\~ao Paulo, 2022.

\bibitem{zapatasectional} C.~A.~I.~Zapata and J.~Gonz\'{a}lez. \emph{Sectional category and The Fixed Point Property}. Topol. Methods Nonlinear Anal.
Volume 56, Number 2 (2020), 559-578.

\bibitem{zapata2022higher} C. A. I. Zapata, and J. Gonz\'alez. \emph{Higher topological complexity of a map}. Turkish Journal of Mathematics, 47(6), 1616-1642 (2023). https://doi.org/10.55730/1300-0098.3453
\end{thebibliography}
\end{document}